\documentclass[twoside,a4paper,reqno,11pt]{amsart}
\usepackage{amsfonts, amsbsy, amsmath, amssymb, latexsym}
\usepackage{mathrsfs,array}
\usepackage[top=30mm,right=30mm,bottom=30mm,left=30mm]{geometry}
\usepackage{stmaryrd}
\usepackage{bm}
\usepackage{xcolor}
\usepackage{hyperref}
\usepackage{tikz-cd}
\usepackage{relsize}

\makeatletter
\@namedef{subjclassname@2020}{\textup{2020} Mathematics Subject Classification}
\makeatother

\def\thefootnote{\fnsymbol{footnote}}

 \renewcommand{\to}{\rightarrow}

\DeclareMathOperator{\GL}{GL}

\DeclareMathOperator{\Rad}{Rad}

\makeatletter
\newcommand{\imod}[1]{\allowbreak\mkern4mu({\operator@font mod}\,\,#1)}
\makeatother

\newtheorem{theorem}{Theorem}[section]
\newtheorem{lemma}[theorem]{Lemma}
\newtheorem{proposition}[theorem]{Proposition}

\newtheorem{corollary}[theorem]{Corollary}

\theoremstyle{definition}

\newtheorem{example}[theorem]{Example}

\begin{document}

\title{The hypercenter of an algebraic group}
\author[D.\ Sercombe]{Damian Sercombe}
\address
{Mathematisches Institut, Universität Freiburg, Ernst-Zermelo-Straße 1, 79104 Freiburg, Germany}
\email{damian.sercombe@math.uni-freiburg.de}

\begin{abstract} We show that any connected algebraic group $G$ over a field admits a nilpotent normal subgroup $Z_\infty(G)$ such that the quotient $G/Z_\infty(G)$ has trivial center. We construct $Z_\infty(G)$ as the final term of the transfinitely extended upper central series of $G$; accordingly, we call it the hypercenter of $G$. We establish several related results about the upper central series of $G$, along with an analogue for algebraic groups of a well-known theorem of Fitting's \cite{Fi}.
\end{abstract}

\let\thefootnote\relax\footnotetext{2020 \textit{Mathematics Subject Classification}. Primary 20G15; Secondary 20G07.}

\maketitle

\section{Introduction}\label{intro}

\noindent The hypercenter of a (possibly infinite, abstract) group $G$ is defined to be the final term of the transfinitely extended upper central series of $G$. It is well-studied and interesting subgroup of $G$, with several equivalent characterisations (see for instance \cite{Ba} or \cite[Ch.\ 1, 2]{R}). 
One such characterisation is that the hypercenter of $G$ equals the intersection of all normal subgroups $N$ of $G$ such that the center of $G/N$ is trivial.

\vspace{2mm}\noindent For some time the author has been interested in finding an analogue of this notion for an algebraic group $G$, by which we mean a group scheme of finite type over a field. This question arose from working on various generation problems for algebraic groups $G$, which turn out to be much easier for the case where $G$ is centerless, so we wanted to be able to reduce these problems to the centerless case.

\vspace{2mm}\noindent In this paper we find what we were looking for. Our main result is that any connected algebraic group $G$ is an extension of an algebraic group with trivial center, by a nilpotent algebraic group. We call this nilpotent normal subgroup the hypercenter $Z_\infty(G)$ of $G$. We construct $Z_\infty(G)$ by extending the upper central series of $G$ by transfinite induction, observing that this transfinitely extended series terminates, and then taking its final term. 
We also characterise $Z_\infty(G)$ and show that it is invariant under base change.

\vspace{2mm}\noindent We prove our main result via several intermediate results, including an analogue of a result of Fitting's \cite[Thm.\ 14]{Fi}. Our proof relies upon \cite[Thm.\ 1.1]{S}, which was only published in 2026. We conclude with an example which shows that the connectedness assumption on $G$ is essential.

\vspace{2mm}\noindent Our setup henceforth is as follows. Let $k$ be a field. 
Let $G$ be an algebraic $k$-group, i.e. a group scheme of finite type over $k$ (we do not assume that $G$ is smooth, nor affine). We will almost always be interested in the case where $G$ is connected. A subgroup of $G$ will refer to a locally closed $k$-subgroup scheme.

\vspace{2mm}\noindent Our main result is the following:

\begin{theorem}\label{centerlessbynilpotent} Let $G$ be a connected algebraic $k$-group. There exists a nilpotent normal subgroup $Z_\infty(G)$ of $G$ such that the center of $G/Z_\infty(G)$ is trivial. The formation of $Z_\infty(G)$ commutes with base change by algebraic field extensions.
\end{theorem}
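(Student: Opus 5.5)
We construct $Z_\infty(G)$ as the terminal term of the transfinitely extended upper central series. Put $Z_0(G)=1$. For a successor ordinal, let $Z_{\alpha+1}(G)$ be the preimage in $G$ of the scheme-theoretic center $Z(G/Z_\alpha(G))$; this center is a closed normal subgroup, and so is its preimage. For a limit ordinal $\lambda$, let $Z_\lambda(G)$ be the closure of the union of the $Z_\alpha(G)$ with $\alpha<\lambda$. The first step is to show that the series stabilises, and in fact after finitely many steps. $G$ is noetherian, so no strictly increasing chain of closed subschemes can be infinite: at each proper inclusion either the dimension or the length of the finite part increases. Hence there is a finite $n$ with $Z_n(G)=Z_{n+1}(G)$. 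At that point the limit stages never intervene, and we set $Z_\infty(G):=Z_n(G)$. By construction $Z(G/Z_\infty(G))=Z_{n+1}(G)/Z_n(G)$ is trivial.

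The second step is nilpotency of $Z_\infty(G)$. Here the inclusion $[G,Z_{i+1}(G)]\subseteq Z_i(G)$ does not directly give what we need: it says $Z_n(G)$ is \emph{centrally} filtered relative to $G$, which is stronger than the lower central series of $Z_n(G)$ terminating. Indeed $[Z_n,Z_n]\subseteq[G,Z_n]\subseteq Z_{n-1}$, and inductively the $i$-th lower central term of $Z_n(G)$ lies in $Z_{n-i}(G)$. So $Z_\infty(G)$ is nilpotent of class at most $n$. All of this should be checked at the level of functors of points, i.e.\ with scheme-theoretic commutators, but it goes through formally.

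The third step, base change along an algebraic extension $K/k$, is the main obstacle. The scheme-theoretic center commutes with arbitrary field extensions: the centralizer is representable, and its formation commutes with base change. Quotients by normal subgroups also commute with base change. Inductively, therefore, $Z_i(G)_K=Z_i(G_K)$ for every $i$, and stabilisation is detected after base change by faithful flatness. This argument seems to need no connectedness. Connectedness, and the paper's appeal to \cite[Thm.\ 1.1]{S}, should instead matter for the \emph{well-definedness or characterisation} of $Z_\infty(G)$, or for a version that uses reduced or smooth centers. If the intended definition takes smooth parts (e.g.\ maximal smooth central subgroups), then compatibility with separable extensions uses Galois descent, and the inseparable case needs the cited result. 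I would first try the scheme-theoretic-center definition, which makes base change formal. Only if that fails, for instance because the quotient $G/Z_\infty(G)$ has a nontrivial center over some extension, would I invoke \cite[Thm.\ 1.1]{S} to control the centers of connected quotients.
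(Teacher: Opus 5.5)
Your first step fails, and everything after it depends on it. Noetherianity of $G$ gives the \emph{descending} chain condition on closed subschemes (ascending chains of ideals stabilise), not the ascending one. For example, $\alpha_p\subset\alpha_{p^2}\subset\alpha_{p^3}\subset\cdots$ in $\mathbb{G}_a$ and $\mu_2\subset\mu_4\subset\cdots$ in $\mathbb{G}_m$ are strictly increasing chains of closed subgroups. Your bookkeeping (``either the dimension or the length of the finite part increases'') bounds only the number of dimension jumps; the finite part can grow forever. There is also a quick sanity check: neither your stabilisation step nor your nilpotency step uses connectedness, yet the statement is false for disconnected groups. In Example \ref{etaleunion}, $G=\mathbb{G}_m\rtimes\mathbb{Z}/2\mathbb{Z}$ has $Z_i(G)=\mu_{2^i}$ for all $i$, so its non-extended series never stabilises. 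Moreover $Z_\omega(G)=\mathbb{G}_m$ and $G/Z_\omega(G)\cong\mathbb{Z}/2\mathbb{Z}$ is commutative, so the hypercenter is all of $G$, which is not nilpotent. For connected $G$ the paper does not claim finite termination either. It only proves termination before $\omega^2$ (Theorem \ref{UCSterminatesthm}(a)), and the proof of the main theorem explicitly treats the case where the non-extended series does not stop.

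Once limit stages can occur, the real work begins, and it is missing from your proposal. $Z_\omega(G)$ is the schematic union of the $Z_i(G)$, whose nilpotency classes are a priori unbounded. Your estimate $[Z_n,Z_n]\subseteq Z_{n-1}$ therefore says nothing about $Z_\omega(G)$, and still less about the later terms $Z_{\omega+i}(G)$.

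The paper proceeds as follows:
\begin{itemize}
\item It reduces to $G$ affine with unipotent center, using Rosenlicht and connectedness.
\item It gets nilpotency of $Z_\omega(G)$ by placing it inside the Fitting subgroup of Theorem \ref{largestnilpotentnormsubgp}. This is where \cite[Thm.\ 1.1]{S} is used, together with the fact that nilpotent normal subgroups of a connected affine group with unipotent center are unipotent.
\item It shows $Z(G/Z_\omega(G))$ is unipotent (Theorem \ref{hypercenterthm}(d)), via a splitting $Z_\omega(G)\rtimes T$, the uniform class bound of Lemma \ref{trignilpotent}, and Proposition \ref{nilclasspresschunion}.
\item It inducts on $\dim G$, using $Z_\infty(G)=\pi^{-1}(Z_\infty(G/Z_\omega(G)))$ from Corollary \ref{hypercentercharcor} and the stability of unipotence under extensions.
\end{itemize}

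Your base-change induction likewise covers only successor stages. At limit stages you need that an infinite schematic union commutes with extension of scalars; this is Proposition \ref{schunionbasechange}, applied to the affine group $G/Z(G)$. So \cite[Thm.\ 1.1]{S} is needed for nilpotency, not for base change as you guessed.
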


\noindent We construct this subgroup $Z_\infty(G)$ as the final term of the transfinitely extended upper central series of $G$. As such, we call it the \textit{hypercenter} of $G$. This subgroup $Z_\infty(G)$ may be characterised as the intersection of all normal subgroups $N$ of $G$ such that the center of $G/N$ is trivial, we prove this in Lemma \ref{hypercenterchar}.

\vspace{2mm}\noindent We use two main ingredients to prove Theorem \ref{centerlessbynilpotent}. The first is the following result, which is an analogue of a well-known theorem of Fitting's \cite[Thm.\ 14]{Fi} for finite (abstract) groups.

\begin{theorem}\label{largestnilpotentnormsubgp} Let $G$ be a connected algebraic $k$-group. There exists a largest nilpotent normal subgroup $F(G)$ of $G$ (i.e. a nilpotent normal subgroup of $G$ which contains all other nilpotent normal subgroups of $G$). 
\end{theorem}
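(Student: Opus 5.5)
The plan is to imitate Fitting's argument: first show that the product of two nilpotent normal subgroups is again nilpotent and normal, and then use a noetherian argument to produce a largest one. The fact that $G$ is of finite type supplies the ascending chain condition we need.

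\textbf{Step 1 (Fitting's lemma for algebraic groups).} Let $N_1,N_2$ be nilpotent normal subgroups of $G$ of classes $c_1,c_2$. Their product $N_1N_2$ is the image of the multiplication morphism $N_1\rtimes N_2\to G$, so it is a normal subgroup of $G$. I will show that $N_1N_2$ is nilpotent of class at most $c_1+c_2$. For abstract groups, each iterated commutator $[x_1,\dots,x_{c_1+c_2+1}]$ with entries in $N_1\cup N_2$ lies in some $\gamma_{i}(N_1)$ with $i>c_1$ or some $\gamma_j(N_2)$ with $j>c_2$, because both $N_i$ are normal in $N_1N_2$. To transfer this to group schemes, I will use that nilpotency of $H$ can be tested by the vanishing of the iterated commutator morphisms $H^{c+1}\to H$, and that this can be checked on $R$-points for all $k$-algebras $R$. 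The trouble is that $(N_1N_2)(R)$ need not equal $N_1(R)N_2(R)$. Instead I will argue fppf-locally: the multiplication map $N_1\times N_2\to N_1N_2$ is faithfully flat (it is a quotient map), so every $R$-point of $N_1N_2$ lifts fppf-locally to a product of points of $N_1$ and $N_2$. The abstract identity then applies on the cover, and the commutator morphism vanishes by fpqc descent. Equivalently, I can work with the lower central series formed scheme-theoretically as in the earlier literature.

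\textbf{Step 2 (existence of a maximal one).} I consider the set of nilpotent normal subgroups of $G$. It is nonempty, since it contains the trivial subgroup. All of them are closed subgroups of the noetherian scheme $G$, so any nonempty family has a maximal element. If a family had no maximal element, there would be a strictly ascending chain of closed subschemes, which corresponds to a strictly descending chain of ideals locally. That is impossible on a noetherian scheme. Take a maximal element $F(G)$. For any nilpotent normal $N$, Step 1 shows that $F(G)N$ is nilpotent normal and contains $F(G)$, so maximality gives $N\subseteq F(G)$. Hence $F(G)$ is the largest one.

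I expect Step 1 to be the main obstacle, specifically making the commutator argument rigorous for non-smooth, possibly non-affine $G$. This is also where I suspect the connectedness hypothesis and the cited result \cite[Thm.\ 1.1]{S} come in. That result may say that nilpotency, or the upper and lower central series, behaves well for such group schemes, for example that the terms of the series are subgroups stable under products. I would invoke it to guarantee that $N_1N_2$ has a scheme-theoretic central series built from those of $N_1$ and $N_2$. The key input is that the $Z_i$ and $\gamma_i$ of the normal subgroups $N_j$ are normal in $G$: they are characteristic, and characteristic subgroups of normal subgroups are normal in the group-scheme sense once the functoriality of the central series is known. With that, the filtration $\gamma_i(N_1)\gamma_j(N_2)$ yields a finite central series of $N_1N_2$.
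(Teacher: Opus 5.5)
\textbf{Step 2 is where the proposal breaks down.} Step 1 is essentially fine. Lifting points of $N_1N_2$ fppf-locally along the faithfully flat map $N_1\times N_2\to N_1N_2$, and applying the abstract Fitting identity in $N_1(R)N_2(R)$, does show that the iterated commutator morphism of length $c_1+c_2+1$ vanishes on $N_1N_2$.

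The gap is the noetherian argument in Step 2. A noetherian scheme has the \emph{descending} chain condition on closed subschemes, i.e.\ the ascending chain condition on ideals; this is what the paper uses for the centralisers in Proposition~\ref{nilclasspresschunion}. It does not have the ascending chain condition on closed subschemes. A strictly ascending chain of closed subschemes is a strictly descending chain of ideals, and noetherian rings allow those. For instance, in the connected commutative group $\mathbb{G}_a$ in characteristic $p$, the Frobenius kernels $\alpha_p\subset\alpha_{p^2}\subset\cdots$ form a family of nilpotent normal subgroups with no maximal member. The chain $\mu_2\subset\mu_4\subset\cdots$ in $\mathbb{G}_m$, as in Example~\ref{etaleunion}, is another. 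So ``any nonempty family has a maximal element'' is false, and as written you have not produced a maximal nilpotent normal subgroup. Note also that your argument never uses that $G$ is connected, whereas the paper's proof needs it exactly at this point.

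\textbf{What is actually needed.} You need that a chain of nilpotent normal subgroups has a nilpotent upper bound, and this is the substance of the theorem. Nilpotency classes can be unbounded along a chain, and Proposition~\ref{nilclasspresschunion} only treats a uniform class bound. Indeed, the paper invokes Theorem~\ref{largestnilpotentnormsubgp} precisely to see that $Z_\omega(G)=\bigcup_i Z_i(G)$ is nilpotent.

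The paper gets this from connectedness. By Rosenlicht, $G^1=G/Z(G)$ is affine with unipotent center. Hence every nilpotent normal subgroup of $G^1$ is unipotent by \cite[IV, \S4, Cor.\ 1.13]{DG}. Then \cite[Thm.\ 1.1]{S} gives a largest unipotent normal subgroup $\Rad_u(G^1)$; this is an ACC-type statement, not a statement about central series as you guessed. The paper sets $F(G)=\zeta^{-1}(\Rad_u(G^1))$ directly, with no product lemma needed.

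\textbf{How to keep your Fitting-style route.} Use the same unipotence to get a uniform class bound instead.
\begin{itemize}
\item Embed $G^1\subseteq\GL_d$. A unipotent subgroup of $\GL_d$ stabilises a full flag, acting trivially on its successive quotients. So it has class at most $d-1$.
\item Hence every nilpotent normal subgroup of $G$ has class at most $d$.
\item Apply Zorn's lemma in $G^1$, using Proposition~\ref{nilclasspresschunion} and \cite[Prop.\ 2.2]{S} to handle unions of chains. This yields a maximal nilpotent normal subgroup.
\item Your Step 1 then upgrades this maximal element to the largest one.
\end{itemize}
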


\noindent Continuing the analogy with finite groups, we call this subgroup $F(G)$ in Theorem \ref{largestnilpotentnormsubgp} the \textit{Fitting subgroup} of $G$. Our proof of Theorem \ref{largestnilpotentnormsubgp} uses the recent result \cite[Thm.\ 1.1]{S}.

\vspace{2mm}\noindent The second ingredient in our proof is showing that the upper central series of $G$ is quite well-behaved, as illustrated in Theorems \ref{hypercenterthm} and \ref{UCSterminatesthm}. We now briefly define this series up to the first infinite ordinal $\omega$; for more details refer to \S \ref{upcenser}.

\vspace{2mm}\noindent The (\textit{non-extended}) \textit{upper central series} $\{Z_i(G)\}_{i\geq 0}$ of an algebraic $k$-group $G$ is an ascending chain of characteristic subgroups of $G$ which is uniquely defined by the following condition: $Z_0(G)$ is trivial and, for all integers $i\geq 0$, $Z_{i+1}(G)/Z_i(G)=Z(G/Z_i(G))$. We define the \textit{$\omega$'th center} of $G$ to be the schematic union $$Z_\omega(G):=\mathsmaller{\bigcup}_{i\geq 0}\,Z_i(G),$$ by which we mean the smallest closed subscheme of $G$ containing $Z_i(G)$ for every integer $i\geq 0$. 

\vspace{2mm}\noindent Henceforth assume that $G$ is connected. We now establish some properties of the $\omega$'th center $Z_\omega(G)$ of $G$.

\begin{theorem}\label{hypercenterthm} Let $G$ be a connected algebraic $k$-group.

\vspace{1mm}\noindent (a) $Z_\omega(G)$ is a nilpotent normal subgroup of $G$.

\vspace{1mm}\noindent (b) The formation of $Z_\omega(G)$ commutes with base change by algebraic field extensions.

\vspace{1mm}\noindent (c) Suppose $G$ is affine and $Z(G)$ is unipotent. Then $Z_\omega(G)$ is unipotent.

\vspace{1mm}\noindent (d) $Z_{\omega}(G/Z_\omega(G))$ is unipotent. 
\end{theorem}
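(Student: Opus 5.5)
We prove the four parts in order, using Theorem \ref{largestnilpotentnormsubgp} as the main tool.

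For (a), we first record that each $Z_i(G)$ is a characteristic, hence normal, subgroup of $G$, and is nilpotent of class at most $i$. Since $G$ is of finite type, it is a noetherian scheme. The ascending chain of closed subschemes $Z_0(G)\subseteq Z_1(G)\subseteq\cdots$ therefore has a schematic union that is cut out by the intersection $\bigcap_i I_i$ of the ideal sheaves. This union need not equal any single $Z_i(G)$, since the chain need not stabilise (e.g.\ for $\mathbb{G}_a\rtimes \mathbb{G}_m$-type examples in characteristic $p$, or infinitesimal layers). So rather than argue stabilisation, we use Theorem \ref{largestnilpotentnormsubgp}. Every $Z_i(G)$ is a nilpotent normal subgroup, so $Z_i(G)\subseteq F(G)$ for all $i$. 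Since $F(G)$ is closed, the schematic union satisfies $Z_\omega(G)\subseteq F(G)$.

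It remains to see that $Z_\omega(G)$ is a normal subgroup. Multiplication $m$ carries $Z_i\times Z_i$ into $Z_i\subseteq Z_\omega$. The schematic closure of $\bigcup_i Z_i\times Z_i$ in $G\times G$ is $Z_\omega\times Z_\omega$, since schematic image commutes with flat base change and products over a field. Hence $m(Z_\omega\times Z_\omega)\subseteq Z_\omega$. The same argument with inversion and with conjugation $G\times Z_\omega\to G$ gives that $Z_\omega(G)$ is a normal subgroup. Finally, $Z_\omega(G)$ is nilpotent because it is a subgroup of the nilpotent group $F(G)$.

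For (b), we use that centers, and hence each $Z_i(G)$, commute with arbitrary field extension: $Z(G)_{K}=Z(G_K)$, and quotients commute with base change. Schematic closure of a union also commutes with flat base change $k\to K$, because the intersection of the ideals commutes with $-\otimes_k K$ for a field extension. So $Z_\omega(G)_K=Z_\omega(G_K)$; only algebraicity of $K$ is stated, so we need not check more.

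For (c), we pass to $\bar k$ using (b). We claim each $Z_{i+1}/Z_i$ is unipotent, by induction. Suppose $Z_i$ is unipotent and $Z(G/Z_i)$ contains a nontrivial multiplicative-type subgroup $M$. Let $H$ be its preimage; $H$ is then an extension of $M$ by the unipotent group $Z_i$. By the splitting and rigidity of multiplicative-type subgroups, $H=Z_i\rtimes M'$ with $M'\cong M$. Since $G$ is connected and $H$ is normal, rigidity forces $G$ to centralise the maximal multiplicative part, so $M'$ is central in $G$, contradicting that $Z(G)$ is unipotent. Since the layers are unipotent, $Z_\omega$ is an increasing union of unipotent groups inside the affine nilpotent group $F(G)$. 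Its multiplicative part is then trivial (it is determined on a finite stage by noetherianity of the torus part), giving unipotence. This is the delicate step: the rigidity argument in the non-smooth case needs care, using the fact that $\mathrm{Aut}$ of a multiplicative group scheme is étale while $G$ is connected.

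For (d), set $\bar G=G/Z_\omega(G)$ and apply the structure theory. First we show $Z(\bar G)$ is unipotent, and in fact affine unipotent. The anti-affine and multiplicative parts of a central subgroup of $\bar G$ lift, again by rigidity and connectedness, to central pieces of $G/Z_i$ for large $i$. This would contradict maximality, since by noetherianity such a piece appears at a finite stage. Then (c) applied to the affinization gives the claim. I expect the main obstacle to be this lifting argument, especially in the non-affine case.
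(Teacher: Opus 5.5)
\textbf{The main gap is in (d), which is the heart of the theorem.} Work, as the paper does, with $G$ affine, $Z(G)$ unipotent and $k=\bar k$. The preimage of $Z(G^\omega)_s$ splits as $Z_\omega(G)\rtimes T$, and a priori $T$ only satisfies $[G,T]\subseteq Z_\omega(G)$.

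Your claim that $T$ lifts to a central piece of $G/Z_i(G)$ for large $i$ ``by noetherianity'' is unsupported. Noetherianity gives the descending chain condition on closed subschemes, not the ascending one, so a closed subscheme of the schematic union $Z_\omega(G)$ need not lie in any $Z_i(G)$. Example \ref{etaleunion} shows exactly this: $\mathbb{G}_m=\bigcup_i\mu_{2^i}$ lies in no finite stage, and there the conclusion of (d) is false. So connectedness must be used substantively, not just invoked as ``rigidity''.

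The ingredient you are missing from the paper is a \emph{uniform} bound:
\begin{itemize}
\item each $Z_i(G)\rtimes T$ is nilpotent of class at most $i+1$;
\item Lemma \ref{trignilpotent} bounds the class of these trigonalisable subgroups of $\GL_d$ independently of $i$;
\item Proposition \ref{nilclasspresschunion} transfers a bounded class to the schematic union.
\end{itemize}
Hence $Z_\omega(G)\rtimes T$ is a nilpotent normal subgroup, so it is unipotent by [DG, IV, \S4, Cor.\ 1.13], forcing $T=1$. Then (c) applies to $G^\omega$. The general case reduces to $G^1$, and there is no anti-affine part to lift, since $G^\omega$ is a quotient of the affine group $G^1$.

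Your rigidity idea can also be made to work, but it needs a different step. $T$ acts trivially on every layer $Z_{i+1}(G)/Z_i(G)$. Inductively, for $z\in Z_{i+1}(G)$ the map $t\mapsto z^{-1}tzt^{-1}$ is then a homomorphism from $T$ to the unipotent group $Z_i(G)$, hence trivial. So $T$ centralises every $Z_i(G)$, and since centralisers are closed, all of $Z_\omega(G)$. Thus $T$ is the largest multiplicative type subgroup of $Z_\omega(G)\times T$, hence normal in $G$, central by rigidity, and trivial.

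\textbf{The same unjustified finiteness appears at the end of (c).} Knowing each $Z_i(G)$ is unipotent does not let you conclude that the multiplicative part of $Z_\omega(G)$ is ``determined at a finite stage''. The conclusion is true, and the quickest proof uses your own (a): $Z_\omega(G)$ is a nilpotent normal subgroup of the connected affine group $G$ with unipotent center, so [DG, IV, \S4, Cor.\ 1.13] gives unipotence directly. This also makes your layer-by-layer induction unnecessary; its step asserting that $M'$ is the maximal multiplicative part of $H$ was not justified either. The paper instead shows $Z(Z_\omega(G))_s\subseteq Z(G)_s$ by rigidity and uses [DG, IV, \S4, Thm.\ 1.10].

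\textbf{Parts (a) and (b)} are essentially the paper's arguments. In (b), your observation that intersections of subspaces commute with $-\otimes_k K$ (choose a $k$-basis of $K$) is simpler than the paper's Weil restriction and descent argument, and does not even need $K/k$ algebraic. In the non-affine case, however, ``schematic image commutes with flat base change'' is not available for the non-quasi-compact map $\coprod_i Z_i(G)\to G$. It is cleaner to argue on the affine group $G^1$, as the paper does.
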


\begin{corollary}\label{hypercentercor1} Let $G$ be a connected algebraic $k$-group. Then, for all integers $i\geq 1$, $Z_{\omega}(G/Z_i(G))$ is unipotent.
\end{corollary}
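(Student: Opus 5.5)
The idea is to reduce to Theorem \ref{hypercenterthm}(d) by showing that the $\omega$'th center is unchanged when we first divide out by a finite term of the upper central series. Precisely, set $H:=G/Z_i(G)$, which is connected since it is a quotient of $G$. The claim to prove is
$$Z_\omega(H)=Z_\omega(G)/Z_i(G),\qquad\text{and hence}\qquad H/Z_\omega(H)\cong G/Z_\omega(G).$$
Given this identification, Theorem \ref{hypercenterthm}(d) says $Z_\omega(G/Z_\omega(G))$ is unipotent. That group is exactly $Z_\omega(H/Z_\omega(H))$, but this is not yet the statement of the corollary, which concerns $Z_\omega(H)$. So the identification alone is not quite enough, and I use it differently, as described next.

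The real step is that the upper central series of a quotient by $Z_i$ is a shift. By the defining recursion we have $Z_j(G/Z_i(G))=Z_{i+j}(G)/Z_i(G)$, proved by induction on $j$. The inductive step applies the third isomorphism theorem for algebraic groups: $(G/Z_i)/(Z_{i+j}/Z_i)\cong G/Z_{i+j}$, and this isomorphism carries centers to centers.

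Taking schematic unions then gives $Z_\omega(H)=Z_\omega(G)/Z_i(G)$. This uses that the quotient map $\pi\colon G\to H$ is faithfully flat. For the closed subscheme $Z_\omega(G)$, which contains $Z_i(G)$, the image $\pi(Z_\omega(G))$ is the closed subgroup $Z_\omega(G)/Z_i(G)$. Moreover, the smallest closed subscheme of $H$ containing all the $Z_{i+j}(G)/Z_i(G)$ pulls back to the smallest closed subscheme of $G$ containing all the $Z_{i+j}(G)$, since preimage under a quotient map gives a bijection between closed subschemes stable under $Z_i$-translation. The union over $j$ is cofinal in the union over all indices, which finishes the identification.

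Now I argue unipotence directly. I would check that the proof of Theorem \ref{hypercenterthm}(d) in fact shows the following: $Z_\omega(G)/Z_1(G)$-type quotients pick up no torus or abelian variety part beyond what is already in $Z(G)$. Cleaner is to apply (c) to $H$ after an affine reduction. Alternatively, and more robustly, apply Theorem \ref{hypercenterthm}(d) itself together with the observation that $Z_\omega(H)$ is nilpotent and connected-by-finite, with maximal torus and abelian-variety parts already central in $G$.

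I expect the main obstacle to be precisely this last step: obtaining unipotence of $Z_\omega(G)/Z_i(G)$ itself, rather than of $Z_\omega$ of the further quotient. My first attempt would be the following. Inside the nilpotent group $Z_\omega(G)$, the multiplicative-type and abelian-variety parts are normal in the connected group $G$. They are therefore centralized by $G$, by rigidity and the fact that $\mathrm{Aut}$ of such groups is étale. Hence they lie in $Z(G)\subseteq Z_i(G)$, so the quotient is unipotent.
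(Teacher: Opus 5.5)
Your shift identity $Z_j(G/Z_i(G))=Z_{i+j}(G)/Z_i(G)$ is correct, and so is the resulting equality $Z_\omega(G/Z_i(G))=Z_\omega(G)/Z_i(G)$. They follow from Lemma \ref{simpleobservation} with $\alpha=i$ and the compatibility of schematic unions with the quotient map. But they only restate the problem. The whole content of the corollary is that $Z_\omega(G)/Z_i(G)$ is unipotent, and your last paragraph does not establish this.

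You never define the ``multiplicative-type and abelian-variety parts'' of $Z_\omega(G)$; this group may be non-smooth, disconnected and non-affine. Take the canonical choice $Z(Z_\omega(G))_s$. It is indeed central in $G$ by rigidity, exactly as in the proof of Theorem \ref{hypercenterthm}(c). Even so, the inference ``hence they lie in $Z(G)$, so the quotient is unipotent'' is unjustified. Nilpotency alone does not make a group an extension of a unipotent group by its multiplicative-type part. For example, the constant quaternion group $Q_8$ in characteristic $\neq 2$ has center $\mu_2$ and quotient $(\mathbb{Z}/2\mathbb{Z})^2$.

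So after dividing out, you would still have to show that no new multiplicative-type (or abelian-variety) central piece appears. That requires three things:
\begin{enumerate}
\item pass first to an affine group;
\item use normality in the connected group a second time, as in Lemma \ref{largestmulttypenormsubgp};
\item apply the Demazure--Gabriel theorem that a nilpotent affine group with unipotent center is unipotent.
\end{enumerate}
That step is missing, and it is the substance of the statement.

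The idea you overlooked is that no reduction is needed. We have $G/Z_i(G)=G^i=G^{i-1}/Z(G^{i-1})$, the central quotient of the connected group $G^{i-1}$. By Rosenlicht's theorem, recalled at the start of Section \ref{proofs}, it is affine and has unipotent center. Theorem \ref{hypercenterthm}(c) therefore applies directly to $H=G^i$ and shows that $Z_\omega(H)$ is unipotent; this is the paper's one-line proof.

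You do mention applying (c) to $H$ ``after an affine reduction'', but you never verify its hypotheses. The crucial one is the unipotence of $Z(H)$, which comes from Rosenlicht, not from any affineness argument. Once you have it, your identification of $Z_\omega(H)$ with $Z_\omega(G)/Z_i(G)$ becomes unnecessary.
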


\noindent According to \cite[IV, \S 3, Thm.\ 1.1(a)]{DG} any commutative affine algebraic $k$-group $H$ admits a largest multiplicative type subgroup, which we denote by $H_s$.

\begin{corollary}\label{hypercentercor2} Let $G$ be a connected affine algebraic $k$-group. Then $Z_{\omega}(G/Z(G)_s)$ is unipotent.
\end{corollary}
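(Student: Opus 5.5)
Set $H:=G/Z(G)_s$, which is a connected affine algebraic $k$-group. The plan is to apply Theorem \ref{hypercenterthm}(c) to $H$. That reduces the corollary to showing that $Z(H)$ is unipotent.

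\vspace{2mm}\noindent The first step is to compare $Z(H)$ with $Z_2(G)/Z(G)_s$. Let $\pi:G\to H$ be the quotient map and let $C:=\pi^{-1}(Z(H))$. The commutator morphism $G\times C\to G$ then lands in $Z(G)_s$. Fix a point $c$ of $C$. Since $Z(G)_s$ is central, the map $g\mapsto [g,c]$ is a homomorphism from $G$ into the multiplicative type group $Z(G)_s$; this should be checked functorially, on $R$-points for all $k$-algebras $R$. Its kernel is the centralizer, so $c$ centralizes every unipotent subgroup of $G$ and every subgroup with no nontrivial multiplicative quotient, such as $[G,G]$.

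\vspace{2mm}\noindent The next step is to show that $Z(H)$ has no nontrivial multiplicative type part. Write $Z(H)_s$ for its largest multiplicative type subgroup, which exists by \cite[IV, \S 3, Thm.\ 1.1(a)]{DG}. Then $M:=\pi^{-1}(Z(H)_s)$ is an extension of a multiplicative type group by a multiplicative type group. Because $G$ is connected and acts trivially on both $Z(G)_s$ and $Z(H)_s$, it acts on $M$ by conjugation through a connected group of automorphisms. Rigidity of multiplicative type groups then suggests that $M$ is commutative and of multiplicative type. The natural tool here is the rigidity of the automorphism functor of a multiplicative type group: the conjugation action of connected $G$ on the normal multiplicative type group $M$ must be trivial, which makes $M$ central. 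It follows that $M\subseteq Z(G)_s$, so $Z(H)_s$ is trivial.

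\vspace{2mm}\noindent The main obstacle is the claim that $M$ is of multiplicative type. An extension of multiplicative type groups is of multiplicative type when the extension is commutative, and over a field such extensions are known to be of multiplicative type. Commutativity should come from $G$-conjugation: the commutator pairing $M\times M\to Z(G)_s$ is bi-multiplicative, and it is induced from the connected group $G$ acting trivially on a rigid target, so it should be trivial. If this works, $Z(H)$ is a commutative affine group whose multiplicative type part is trivial. Over a perfect closure it is then unipotent, and unipotence descends. Finally, Theorem \ref{hypercenterthm}(c) applied to $H$ gives that $Z_\omega(G/Z(G)_s)$ is unipotent.
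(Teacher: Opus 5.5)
Your reduction is the same as the paper's. The paper gets $Z(G/Z(G)_s)_s=1$ from Lemma \ref{largestmulttypenormsubgp}, concludes that $Z(G/Z(G)_s)$ is unipotent, and applies Theorem \ref{hypercenterthm}(c). Where you prove $Z(H)_s=1$ yourself, there is a gap exactly at the step you flag: commutativity of $M=\pi^{-1}(Z(H)_s)$ is asserted but not proved. The mechanism you propose --- the bimultiplicative pairing $M\times M\to Z(G)_s$ is trivial because its target is rigid --- does not work. Rigidity stops such a pairing from varying in a connected family; it does not make a given pairing trivial. Indeed, non-commutative central extensions of multiplicative type groups by multiplicative type groups exist: the constant dihedral group of order $8$ over $\overline{k}$ with $p\neq 2$ is such an extension of $\mu_2\times\mu_2$ by $\mu_2$. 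So nothing intrinsic to $M$ gives commutativity; connectedness of $G$ has to enter. Appealing instead to rigidity of $\mathrm{Aut}(M)$ presupposes that $M$ is of multiplicative type, which is what you are trying to show, so that sentence is circular.

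The repair is to pair $G$, not $M$, against $M$. For $g\in G(R)$ and $m\in M(R)$, the commutator $gmg^{-1}m^{-1}$ lies in $Z(G)_s(R)$. Because $Z(G)_s$ is central, this is multiplicative in $g$ and in $m$, and it kills $Z(G)_s$ in the second slot. It therefore defines a homomorphism from $G$ to the Hom-scheme $\mathrm{Hom}(Z(H)_s,Z(G)_s)$. That scheme is \'etale since both groups are of multiplicative type, and $G$ is connected, so the homomorphism is trivial and $M$ is central in $G$. Then $Z(H)_s=M/Z(G)_s$ embeds in $Z(G)/Z(G)_s$, which is unipotent by \cite[IV, \S 3, Thm.\ 1.1]{DG}, so $Z(H)_s=1$. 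You never need $M$ to be of multiplicative type, and the same theorem gives $Z(H)$ unipotent without passing to a perfect closure.

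Separately, your first paragraph is never used, and its functorial claim is false. In characteristic $p$, take $\mathbb{G}_m\times\alpha_p\times\alpha_p$ with $(z,x,y)(z',x',y')=(zz'e(xy'),x+x',y+y')$, where $e$ is the truncated exponential. This group is connected with center $\mathbb{G}_m$. Yet for the $R$-point $c=(1,0,y)$ one gets $[(1,x,0),c]=e(xy)\neq 1$ on the unipotent subgroup $\{(1,x,0)\}\cong\alpha_p$.
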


\noindent We may extend the upper central series of $G$ to higher ordinals via transfinite recursion, as follows. For each ordinal $\alpha$ we define a subgroup $Z_\alpha(G)$ of $G$ by the following rules: \begin{itemize}\item $Z_0(G)$ is trivial, \vspace{1mm}\item $Z_{\alpha}(G)/Z_{\alpha-1}(G)=Z(G/Z_{\alpha-1}(G))$ if $\alpha$ is a successor ordinal, and \vspace{0.8mm}\item $Z_\alpha(G)=\bigcup_{\beta<\alpha}Z_\beta(G)$ if $\alpha$ is a limit ordinal.\end{itemize} The (\textit{transfinitely extended}) \textit{upper central series} of $G$ is the normal series \begin{equation}\label{transuppercentserorig} \{Z_\alpha(G)\}\,,\end{equation} where $\alpha$ runs over all of the ordinals. [N.B. For this definition to make sense we need to check that each $Z_\alpha(G)$ is indeed a normal subgroup of $G$, see \S \ref{upcenser} for more details.]

\begin{theorem}\label{UCSterminatesthm} Let $G$ be a connected algebraic $k$-group.

\vspace{1mm}\noindent (a) The (transfinitely extended) upper central series $(\ref{transuppercentserorig})$ of $G$ terminates at an ordinal which is strictly less than $\omega^2$. 
In particular, the length of $(\ref{transuppercentserorig})$ is countable.

\vspace{1mm}\noindent (b) For any ordinal $\alpha$, the formation of $Z_\alpha(G)$ commutes with base change by algebraic field extensions.
\end{theorem}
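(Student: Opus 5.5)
We prove (a) by showing that the series stabilises within finitely many blocks of length $\omega$, and (b) by transfinite induction on $\alpha$. The main tool is Theorem \ref{hypercenterthm}(d), which says the second $\omega$-block of the series is unipotent. We combine it with a dimension argument: a unipotent (indeed any) strictly increasing chain of closed subgroups must eventually stabilise, because dimension and the number of connected components / the nilpotent thickening are bounded.

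\emph{Structure of the series.} For an ordinal $\alpha$, the series of $G/Z_\alpha(G)$ is the shifted series, $Z_\beta(G/Z_\alpha(G))=Z_{\alpha+\beta}(G)/Z_\alpha(G)$. This follows by induction on $\beta$, using that quotients commute with schematic unions of increasing chains of normal subgroups; this is routine but should be stated. Each $Z_\alpha(G)$ is normal: at successor steps it is a preimage of a center, and at limit steps the schematic union of an ascending chain of normal subgroups is a normal subgroup. The latter holds because, $G$ being noetherian, the chain of closed subschemes $Z_\beta(G)$ stabilises as schemes. In fact this noetherian argument already shows that \emph{any} ascending chain of closed subschemes of $G$ stabilises, so at each limit stage the union is attained at some earlier term. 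Consequently the only real content of (a) is the bound $<\omega^2$, that is, that stabilisation happens at some ordinal $\omega\cdot n+m$.

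\emph{Bounding by $\omega^2$.} The transfinite series is an ascending chain of closed subschemes $Z_\alpha(G)$ of the noetherian scheme $G$. A strictly increasing chain of closed subschemes indexed by ordinals is well ordered, and each strict inclusion strictly decreases the corresponding ideal chain in reverse. I would argue via dimension and length. Let $\delta_\alpha=(\dim Z_\alpha(G), \text{length data})$. Between $Z_{\omega n}$ and $Z_{\omega(n+1)}$, if the series did not stabilise inside a block, we would get infinitely many strict inclusions $Z_{\omega n+i}\subsetneq Z_{\omega n+i+1}$. These happen within a fixed closed subscheme $Z_{\omega(n+1)}$, which is the limit, and this is fine. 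The real claim needed is this: if the union over a block strictly increases, then $\dim Z_{\omega(n+1)}>\dim Z_{\omega n}$, or some finite invariant drops. Theorem \ref{hypercenterthm}(d) and Corollary \ref{hypercentercor1} give that from the second block on, each quotient block $Z_\omega(G/Z_{\omega n}(G))$ is unipotent, connected and smooth modulo a finite infinitesimal part. So an infinite strictly increasing sequence inside a block, whose union has dimension equal to the starting dimension, would be an infinite strictly increasing chain of finite subgroup schemes in a fixed finite group scheme, which is impossible. Hence each nonstabilising block raises $\dim Z_\alpha(G)$, which happens at most $\dim G$ times. The series therefore stops before $\omega\cdot(\dim G+2)<\omega^2$. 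In particular its length is countable.

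\emph{Base change, part (b).} We use transfinite induction on $\alpha$. For successor ordinals, the formation of the center commutes with arbitrary field extension, by the functorial definition of the center via the scheme-theoretic centraliser. Combined with $Z_\alpha(G)_K/Z_{\alpha-1}(G)_K=(G/Z_{\alpha-1}(G))_K$ centers, this gives the successor step. For limit $\alpha$, the union is attained at a finite stage of the chain, as noted above, so it is an honest term and commutes with base change once the earlier terms do. Alternatively, schematic closure of a union commutes with flat base change. I expect the main obstacle to be making the dimension-drop step of the bound rigorous for nonsmooth $G$. Here I would pass to $\bar k$ via (b) and use that in each block after the first, the increments are unipotent by Theorem \ref{hypercenterthm}(d).
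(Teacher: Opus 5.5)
There is a genuine gap, and it sits in your ``structure of the series'' paragraph. Noetherianity of $G$ gives the ascending chain condition on ideals. That is the \emph{descending} chain condition on closed subschemes, which is how the paper uses it for centralisers in Proposition~\ref{nilclasspresschunion}. It says nothing about ascending chains of closed subschemes, and these in general do not stabilise. The Frobenius kernels $\alpha_{p^i}\subset\mathbb{G}_a$, or $Z_i(G)=\mu_{2^i}\subset\mathbb{G}_m$ in Example~\ref{etaleunion}, are strictly increasing chains of closed subgroups whose schematic union is none of their terms. This is the reason the paper needs schematic unions at all. Your own part (a) also contradicts the claim, since there you allow infinitely many strict inclusions inside a block. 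If the claim held for the upper central series, (a) would hold with $\omega$ in place of $\omega^2$, and nothing in your argument proves that.

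Your primary argument for the limit step of (b) rests on this claim, so as written it is unjustified. Your fallback, ``schematic closure of a union commutes with flat base change'', is false in that generality, because $\coprod_{Y}Y\to X$ is not quasi-compact. For example, the closed subschemes $V(t^n)\subset\mathbb{A}^1$ have schematic union $\mathbb{A}^1$, but their pullbacks to the flat open $\mathbb{G}_m\subset\mathbb{A}^1$ are empty.

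What you actually need is the special case proved in Proposition~\ref{schunionbasechange}: an affine ambient scheme of finite type and an algebraic extension $K/k$. Since $G$ need not be affine, you must apply it on the affine quotient $G^1$ and pull back along $\zeta$ via $Z_\omega(G)=\zeta^{-1}(Z_\omega(G^1))$; this is Theorem~\ref{hypercenterthm}(b). The paper then treats $\alpha=\omega\cdot n$ by induction on $n$. It applies that result to the connected group $G^{\omega\cdot n}$ via $Z_{\omega\cdot(n+1)}(G)/Z_{\omega\cdot n}(G)=Z_\omega(G^{\omega\cdot n})$, and uses (a) to see that no other limit ordinals need to be considered.

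Part (a), once that claim is discarded, is essentially the paper's argument and is correct. If $\dim Z_{\omega\cdot(n+1)}(G)=\dim Z_{\omega\cdot n}(G)$, then $Z_\omega(G^{\omega\cdot n})$ is a finite group scheme. So the chain $Z_i(G^{\omega\cdot n})$ stabilises, being a descending chain of ideals in an artinian ring. By Lemma~\ref{simpleobservation}, the whole series then terminates at some $\omega\cdot n+t$, and the dimension can rise at most $\dim G$ times. Theorem~\ref{hypercenterthm}(d) plays no role here. Your description of the blocks as ``connected and smooth modulo a finite infinitesimal part'' is also unjustified: a unipotent group can be disconnected, e.g.\ $\mathbb{Z}/p\mathbb{Z}$ in characteristic $p$. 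That detour should simply be dropped.
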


\noindent We conclude with the observation that Theorem \ref{centerlessbynilpotent} fails without the assumption of $G$ being connected. We illustrate this in Example \ref{etaleunion}, where we take $G=\mathbb{G}_m\rtimes\mathbb{Z}/2\mathbb{Z}$ with non-trivial associated action.

\section{Preliminaries}\label{preliminaries}

\noindent Let $k$ be a field, say of characteristic $p\geq 0$. 

\vspace{2mm}\noindent By an \textit{algebraic $k$-group} we mean a group scheme of finite type over $k$. Unless explicitly stated, we do not assume that algebraic $k$-groups are smooth, nor affine. By a \textit{subgroup} $H$ of an algebraic $k$-group $G$ we mean a locally closed $k$-subgroup scheme; note that $H$ is automatically closed by \cite[047T]{Stacks} and is of finite type by \cite[01T5, 01T3]{Stacks}. The \textit{image} of a homomorphism of algebraic $k$-groups refers to the schematic image, in the sense of \cite[01R7]{Stacks}.

\vspace{2mm}\noindent For the case where $p>0$, we denote by $\alpha_p$ and $\mu_p$ the first Frobenius kernels of the additive group $\mathbb{G}_a$ and the multiplicative group $\mathbb{G}_m$ respectively.

\subsection{Schematic unions}\label{schunions}\textcolor{white}{a}

\vspace{2mm}\noindent Let us recall the definition of the schematic union.

\vspace{2mm}\noindent Let $X$ be a scheme. Let $\mathcal{Y}$ be a set of closed subschemes of $X$. [N.B. No set-theoretic issues arise here, as the category of schemes is well-powered.] The category of schemes admits all (even infinite) coproducts, so we may form the coproduct scheme $\coprod_{Y\in\mathcal{Y}} Y$. Let $\xi:\coprod_{Y\in\mathcal{Y}} Y \to X$ be the morphism of schemes given by inclusion on each component. We then define the \textit{schematic union} $\bigcup_{Y\in\mathcal{Y}}Y$ of $\mathcal{Y}$ in $X$ to be the (schematic) image of $\xi$.

\vspace{2mm}\noindent If $X$ is affine then $\bigcup_{Y\in\mathcal{Y}}Y$ is the closed subscheme of $X$ whose associated ideal is the intersection of those of every $Y\in\mathcal{Y}$. 

\vspace{2mm}\noindent In the following situation, we show that the formation of the schematic union commutes with base change. Recall that $k$ is a field.

\begin{proposition}\label{schunionbasechange} Let $X$ be an affine scheme of finite type over $k$. Let $\mathcal{Y}$ be a (possibly infinite) set of closed subschemes of $X$. Consider the schematic union $\bigcup_{Y\in\mathcal{Y}}Y$ of $\mathcal{Y}$ in $X$. Let $K/k$ be an algebraic field extension. Then $(\bigcup_{Y\in\mathcal{Y}}Y)_K=\bigcup_{Y\in\mathcal{Y}}Y_K$. 
\begin{proof} The inclusion $\bigcup_{Y\in\mathcal{Y}}Y_K\subseteq(\bigcup_{Y\in\mathcal{Y}}Y)_K$ is clear, as by definition $\bigcup_{Y\in\mathcal{Y}}Y_K$ is the smallest closed subscheme of $X_K$ which contains $Y_K$ for every $Y\in\mathcal{Y}$. Now for the opposite inclusion.

\vspace{2mm}\noindent Let $\mathcal{O}(X)$ be the coordinate ring of $X$; it is a finitely generated $k$-algebra. Consider the extension of scalars functor $(-)\otimes_k K$ from the category of finitely generated $k$-algebras to the category of finitely generated $K$-algebras. Since $K/k$ is a field extension, this functor $(-)\otimes_k K$ is exact. 
For any closed subscheme $Z$ of $X$, let $I(Z)$ denote the ideal of $\mathcal{O}(X)$ corresponding to $Z$. It follows from the exactness of $(-)\otimes_k K$ that \begin{equation}\label{compat} I(Z_K)=I(Z)\otimes_k K.\end{equation} 
By definition of the schematic union, we have \begin{equation}\label{defschunion} I(\mathsmaller{\bigcup}_{Y\in\mathcal{Y}}Y)=\mathsmaller{\bigcap}_{Y\in\mathcal{Y}}I(Y).\end{equation}

\vspace{2mm}\noindent For the moment assume that $K/k$ is finite. It then follows from \cite[\S 7.6, Lem.\ 1, Thm.\ 4]{BLR} that the functor $(-)\otimes_k K$ has a left adjoint; namely, Weil restriction. 
Hence $(-)\otimes_k K$ preserves all categorical limits; in particular, all (possibly infinite) intersections. Combining this with Equations $(\ref{compat})$ and $(\ref{defschunion})$ gives us $$I(\mathsmaller{\bigcup}_{Y\in\mathcal{Y}}Y_K)
=\mathsmaller{\bigcap}_{Y\in\mathcal{Y}}(I(Y)\otimes_k K)=(\mathsmaller{\bigcap}_{Y\in\mathcal{Y}}I(Y))\otimes_k K
=I((\mathsmaller{\bigcup}_{Y\in\mathcal{Y}}Y)_K).$$ Therefore $\bigcup_{Y\in\mathcal{Y}}Y_K=(\bigcup_{Y\in\mathcal{Y}}Y)_K$.

\vspace{2mm}\noindent We now relax the assumption that $K/k$ is finite. Denote $Z:=\bigcup_{Y\in\mathcal{Y}}Y_K$. Let $k'/k$ be the minimal field of definition for $Z$ in $X_K$, in the sense of \cite[Def.\ 1.1.6]{CGP}. Let $Z'$ be the $k'$-descent of $Z$ in $X_{k'}$. We claim that $k'/k$ is finite. For the moment, assume the claim holds. We already proved the finite case, so $(\bigcup_{Y\in\mathcal{Y}}Y)_{k'}=\bigcup_{Y\in\mathcal{Y}}Y_{k'}$. By definition $\bigcup_{Y\in\mathcal{Y}}Y_{k'}$ is the smallest closed subscheme of $X_{k'}$ which contains $Y_{k'}$ for every $Y\in\mathcal{Y}$. But $Z'$ also contains every $Y_{k'}$, hence $\bigcup_{Y\in\mathcal{Y}}Y_{k'}\subseteq Z'$. Then extending scalars by $K/k'$ gives us $$(\mathsmaller{\bigcup}_{Y\in\mathcal{Y}}Y)_K=(\mathsmaller{\bigcup}_{Y\in\mathcal{Y}}Y_{k'})_K\subseteq Z.$$

\vspace{2mm}\noindent It remains to prove the claim. 
Let $J$ be the ideal of $\mathcal{O}(X)\otimes_k K$ associated to $Z$. By assumption $\mathcal{O}(X)$ is a finitely generated $k$-algebra. 
So $\mathcal{O}(X)$, and hence $\mathcal{O}(X)\otimes_k K$, is a Noetherian ring. Let $\{a_1,...,a_n\}$ be a finite generating set for $J$ as an $(\mathcal{O}(X)\otimes_k K)$-module. 
For each $i=1,...,n$ write $a_i=x_i\otimes\alpha_i$, where $x_i\in\mathcal{O}(X)$ and $\alpha_i\in K$. Let $L:=k(\alpha_1,...,\alpha_n)$. Of course $a_i\in\mathcal{O}(X)\otimes_k L$ for every $i$, so $J$ descends to an ideal of $\mathcal{O}(X)\otimes_k L$. Translating this to affine schemes, $Z$ descends to a closed subscheme of $X_L$. Hence $k'\subseteq L$. But $L/k$ is finite, since each $\alpha_i$ is algebraic over $k$. 
Therefore $k'/k$ is finite, proving the claim.
\end{proof}
\end{proposition}

\noindent We now turn our attention to algebraic groups.

\vspace{2mm}\noindent Henceforth let $G$ be an affine algebraic $k$-group, and let $\{H_i \hspace{0.5mm}|\hspace{0.5mm} i\in I\}$ be a -- possibly uncountable -- set of (closed) subgroups of $G$ that is upward directed under inclusion (i.e. $I$ is an upward directed poset where, for $i,j\in I$, $i\leq j$ if and only if $H_i\subseteq H_j$). Let $H:=\bigcup_{i\in I}H_i$, the schematic union of the $H_i$'s in $G$. It was shown in \cite[Prop.\ 2.2]{S} that $H$ is a subgroup of $G$.

\vspace{2mm}\noindent The \textit{nilpotency class} of a nilpotent algebraic $k$-group refers to the (finite) length of its upper central series. For more details see \S\ref{upcenser}.

\begin{proposition}\label{nilclasspresschunion} Let $c\geq 0$ be an integer. Suppose, for every $i\in I$, that $H_i$ is nilpotent with nilpotency class at most $c$. Then $H$ is nilpotent with nilpotency class at most $c$.
\begin{proof} We first consider the case where $I$ is a (possibly uncountable) chain. We proceed by induction on $c$. If $c=0$ then each $H_i$ is trivial, hence $H$ is also trivial. Now assume that $c>0$.

\vspace{2mm}\noindent Taking the centraliser in $G$ of every element of $\{H_i \hspace{0.5mm}|\hspace{0.5mm}i\in I\}$ gives us a descending chain \begin{equation}\label{centchain} \big(Z_G(H_i)\big)_{i\in I}\, .\end{equation} Since $G$ is a Noetherian scheme, 
it satisfies the descending chain condition on closed subschemes. So this chain $(\ref{centchain})$ stabilises. 
That is, there exists a sufficiently large $r\in I$ such that $Z_G(H_i)=Z_G(H_r)$ for all $i\in I$ with $i \geq r$. Consequently, $Z_G(H)=Z_G(H_r)$. 
It follows that \begin{equation}\label{centers} Z(H_i)=H_i\cap Z(H)\end{equation} for every $i\geq r$. 

\vspace{2mm}\noindent Consider the natural projection $\rho:H\to H/Z(H)$. By definition of the schematic union, observe that \begin{equation}\label{imunion}\rho(H)=\mathsmaller{\bigcup}_{i\in I}\rho(H_i).\end{equation} 
Using Equation $(\ref{centers})$, for every $i\geq r$ we have \begin{equation}\label{rhoHi}\rho(H_i)
\cong H_i/(H_i \cap Z(H))=H_i/Z(H_i).\end{equation}

\vspace{1mm}\noindent By assumption, for every $i\in I$, $H_i$ is nilpotent with nilpotency class at most $c$. Therefore, by Equation $(\ref{rhoHi})$, for every $i\geq r$ (and hence every $i\in I$) we see that $\rho(H_i)$ is nilpotent with nilpotency class at most $c-1$. Then combining Equation $(\ref{imunion})$ with the inductive hypothesis tells us that $\rho(H)$ is nilpotent with nilpotency class at most $c-1$. Hence $H$ is nilpotent with nilpotency class at most $c$. 

\vspace{2mm}\noindent It remains to consider the general case; that is, $I$ is an arbitrary upward directed poset. Let $\mathcal{P}$ be the poset of nilpotent subgroups of $G$ with nilpotency class at most $c$, under inclusion. We already showed that every chain in $\mathcal{P}$ admits a supremum; namely, its schematic union in $G$. 
It is a basic set-theoretic fact that, given a poset in which every chain admits a supremum, then every upward directed subset admits a supremum. 
Hence this is true of $\mathcal{P}$. In particular, our upward directed subset $\{H_i \hspace{0.5mm}|\hspace{0.5mm}i\in I\}$ of $\mathcal{P}$ admits a supremum in $\mathcal{P}$; let us denote it by $\overline{H}$.

\vspace{2mm}\noindent By definition of the schematic union, $H$ is contained in $\overline{H}$. But we already know that $H$ is a subgroup of $G$, so $H$ is nilpotent with nilpotency class at most $c$ (and thus $H=\overline{H}$).
\end{proof}
\end{proposition}

\begin{corollary}\label{commpresschunion} Suppose $H_i$ is commutative for every $i\in I$. Then $H$ is commutative.
\begin{proof} This is just Proposition \ref{nilclasspresschunion} for the case $c=1$.
\end{proof}
\end{corollary}

\subsection{The upper central series}\label{upcenser}\textcolor{white}{a}

\vspace{2mm}\noindent The upper central series of an algebraic $k$-group $G$ is defined inductively as follows. 

\vspace{2mm}\noindent Denote $G^0:=G$, and let $\zeta_0:G\to G$ be the identity map. For each integer $i\geq 1$ let $G^i:=G^{i-1}/Z(G^{i-1})$, and define \begin{equation}\label{zetai} \zeta_i:G\to G^i\end{equation} to be the composition of $\zeta_{i-1}:G\to G^{i-1}$ with the natural projection $G^{i-1}\to G^i$.

\vspace{2mm}\noindent For each integer $i\geq 0$, the kernel $\ker\zeta_i=:Z_i(G)$ is called the \textit{$i$'th center} of $G$. Each $Z_i(G)$ is a characteristic subgroup of $G$, see for instance \cite[IV, \S4, 1.3, Rem.\ 1.16]{DG}. Note that $Z_1(G)$ is just the usual center $Z(G)$ of $G$. For ease of notation we will usually refer to the natural projection map $\zeta_1:G\to G/Z(G):=G^1$ by simply $\zeta$.

\vspace{2mm}\noindent The central series \begin{equation}\label{uppercentser} 1=Z_0(G)\subseteq Z_1(G)\subseteq Z_2(G)\subseteq ... \end{equation} is called the (\textit{non-extended}) \textit{upper central series} of $G$. 

\vspace{2mm}\noindent Equivalently, the upper central series $(\ref{uppercentser})$ is the unique ascending central series of $G$ which satisfies the following condition: $Z_0(G)$ is trivial and, for all integers $i\geq 0$, $Z_{i+1}(G)/Z_i(G)=Z(G/Z_i(G))$.

\vspace{2mm}\noindent We say that $G$ is \textit{nilpotent} if this series $(\ref{uppercentser})$ terminates after finitely many terms at $G$. If $G$ is nilpotent then its \textit{nilpotency class} is the smallest integer $c$ such that $G=Z_c(G)$. These definitions, along with some characterisations, may be found in \cite[VIB, Def.\ 8.2.1, Prop.\ 8.2]{SGA3}. 

\vspace{2mm}\noindent Let $\omega$ denote the first infinite ordinal. 
We define the \textit{$\omega$'th center} of $G$ to be the schematic union \begin{equation}\label{omegacenterdef} Z_\omega(G):=\mathsmaller{\bigcup}_{i\geq 0}\,Z_i(G).\end{equation}

\vspace{1mm}\noindent For the case where $G$ is connected, we establish some properties of $Z_\omega(G)$ in Theorem \ref{hypercenterthm}. This will become a key ingredient in our proof of Theorem \ref{centerlessbynilpotent}. 

\vspace{2mm}\noindent Henceforth assume that $G$ is connected. We now extend the upper central series of $G$ to higher ordinals via transfinite recursion. This process is analogous to the well-known construction of the transfinite upper central series of an abstract group. Indeed, one could generalise both constructions by relaxing the ``finite type" and ``connected" assumptions on $G$. 

\vspace{2mm}\noindent Associated to every ordinal $\alpha$ is a normal subgroup $Z_\alpha(G)$ of $G$, which we call the \textit{$\alpha$'th center} of $G$. It is defined inductively by the following conditions: \begin{itemize}\item $Z_0(G)$ is trivial, \vspace{1mm}\item  $Z_{\alpha+1}(G)/Z_{\alpha}(G)=Z(G/Z_{\alpha}(G))$ for each ordinal $\alpha$, and \vspace{0.8mm}\item $Z_\gamma(G)=\bigcup_{\beta<\gamma}Z_\beta(G)$ for each limit ordinal $\gamma$ (where $\bigcup$ refers to the schematic union in $G$).\end{itemize}

\vspace{1mm}\noindent In order for this definition to make sense, we need to check that each $Z_\alpha(G)$ is indeed a normal subgroup of $G$. This is clear if $\alpha=0$, so assume that $\alpha>0$ and $Z_\beta(G)$ is normal in $G$ for every ordinal $\beta<\alpha$. If $\alpha$ is a successor ordinal then certainly $Z_\alpha(G)$ is normal in $G$, by the correspondence theorem. Suppose $\alpha$ is a limit ordinal. By assumption $G$ is connected, hence $G^1$ is affine by \cite[Cor.\ 8.11]{Mi}. 
Then applying \cite[Prop.\ 2.2]{S} to $G^1$ implies that $Z_\alpha(G^1)$ is a normal subgroup of $G^1$. Therefore $Z_\alpha(G)$ is a normal subgroup of $G$, as it is the preimage of $Z_\alpha(G^1)$ under the natural projection $\zeta:G\to G^1$.

\vspace{2mm}\noindent The (\textit{transfinitely extended}) \textit{upper central series} of $G$ is the central series \begin{equation}\label{transuppercentser} \{Z_\alpha(G)\}\,,\end{equation} where $\alpha$ runs over all of the ordinals. 

\vspace{2mm}\noindent Let $\kappa$ be the cardinality of our scheme $G$ (in the sense of \cite[000H]{Stacks}). Since the cardinality of each $Z_\alpha(G)$ cannot exceed $\kappa$, there exists a least ordinal $\lambda$ 
such that the upper central series $(\ref{transuppercentser})$ of $G$ terminates at $Z_\lambda(G)$. That is, $Z_\lambda(G)=Z_\mu(G)$ for all ordinals $\mu >\lambda$. We will show in Theorem \ref{UCSterminatesthm}(a) that $\lambda$ is strictly less than the ordinal $\omega^2$; in particular, it is countable.

\vspace{2mm}\noindent We define the \textit{hypercenter} of $G$ to be the normal subgroup \begin{equation}\label{hypercenterdef} Z_\infty(G):=Z_\lambda(G)\, .\end{equation} A subgroup of $G$ is said to be \textit{hypercentral} if it is contained in $Z_\infty(G)$. 

\vspace{2mm}\noindent We conclude this section with a few basic results that we will need to prove Theorem \ref{centerlessbynilpotent}. Their proofs are straightforward. Continue to let $G$ be a connected algebraic $k$-group.

\begin{lemma}\label{simpleobservation} For each ordinal $\alpha$ and each integer $i\geq 0$, we have $$Z_{\alpha+i}(G)/Z_\alpha(G)=Z_i(G/Z_\alpha(G)).$$
\begin{proof} For each ordinal $\beta$, let $\zeta_\beta:G\to G/Z_\beta(G)=:G^\beta$ be the natural projection. For each ordinal $\beta$ and integer $j\geq 0$, let $\smash{\rho^\beta_j}:G^\beta\to G^\beta/Z_j(G^\beta)$ be the natural projection.

\vspace{2mm}\noindent Now fix an ordinal $\alpha$ and an integer $i\geq 0$. We will show that $\zeta_{\alpha+i}=\rho^\alpha_i\circ\zeta_\alpha$. We proceed by induction on $i$. The cases $i=0$ and $i=1$ are clear, so assume $i>1$. By the inductive hypothesis, we have $$\zeta_{\alpha+i-1}=\rho^\alpha_{i-1}\circ\zeta_\alpha:G\to G^{\alpha+i-1}.$$ But by definition (or again by the inductive hypothesis), we see that $\zeta_{\alpha+i}$ equals the composition of $\zeta_{\alpha+i-1}:G\to G^{\alpha+i-1}$ with the natural projection $\rho^{\alpha+i-1}_1:G^{\alpha+i-1}\to G^{\alpha+i-1}/Z(G^{\alpha+i-1})$. Similarly, $\rho^{\alpha+i-1}_1\circ\rho^\alpha_{i-1}=\rho^\alpha_i$. In summary, $$\zeta_{\alpha+i}=\rho^{\alpha+i-1}_1\circ\zeta_{\alpha+i-1}=\rho^{\alpha+i-1}_1\circ\rho^\alpha_{i-1}\circ \zeta_\alpha=\rho^\alpha_i\circ\zeta_\alpha.$$ Consequently, $$Z_{\alpha+i}(G)/Z_\alpha(G)=\zeta_\alpha(\ker\zeta_{\alpha+i})=\ker\rho^\alpha_i=Z_i(G^\alpha). \qedhere$$ 
\end{proof}
\end{lemma} 

\begin{lemma}\label{hypercenterchar} The hypercenter $Z_\infty(G)$ equals the intersection of all normal subgroups $N$ of $G$ such that $G/N$ has trivial center. 
\begin{proof} By construction, the center of $G/Z_\infty(G)$ is trivial. Let $N$ be a normal subgroup of $G$ such that $Z(G/N)$ is trivial. We need to show that $Z_\infty(G)$ is contained in $N$. Assume otherwise (for a contradiction). Then there exists a least ordinal $\alpha$ such that $Z_\alpha(G)$ is not contained in $N$. 

\vspace{2mm}\noindent Suppose $\alpha$ is a limit ordinal. Then by definition $Z_\alpha(G)=\bigcup_{\beta<\alpha}Z_\beta(G)$, but each $Z_\beta(G)$ is contained in $N$ hence so is $Z_\alpha(G)$. This is a contradiction.

\vspace{2mm}\noindent Now suppose $\alpha$ is a successor ordinal. Let $\varphi:G\to G/N$ be the natural projection. Since $Z_{\alpha-1}(G)$ is contained in $N$, by the universal property of the quotient $\varphi$ factors through a quotient map $\psi:G/Z_{\alpha-1}(G)\to G/N$. Observe that $Z(G/Z_{\alpha-1}(G))\subseteq\ker\psi$, since $\psi(Z(G/Z_{\alpha-1}(G)))\subseteq Z(G/N)=1$. But by definition $Z_\alpha(G)/Z_{\alpha-1}(G)=Z(G/Z_{\alpha-1}(G))$, and hence $Z_{\alpha}(G)\subseteq N$. Once again, we have a contradiction.
\end{proof}
\end{lemma}

\begin{corollary}\label{hypercentercharcor} Let $f:G\to G'$ be a faithfully flat homomorphism between connected algebraic $k$-groups, where $\ker f$ is hypercentral in $G$. Then $f(Z_\infty(G))=Z_\infty(G')$. 
\begin{proof} Consider the natural isomorphism $G/f^{-1}(Z_\infty(G'))\cong G'/Z_\infty(G')$. By construction $G'/Z_\infty(G')$ has trivial center, so this is also true of $G/f^{-1}(Z_\infty(G'))$. Therefore $Z_\infty(G)\subseteq f^{-1}(Z_\infty(G'))$, by Lemma \ref{hypercenterchar}.

\vspace{2mm}\noindent Now for the opposite inclusion. Since $\ker f$ is hypercentral in $G$, we have an isomorphism $G/Z_\infty(G)\cong G'/f(Z_\infty(G))$. Therefore $G'/f(Z_\infty(G))$ has trivial center, as this is true of $G/Z_\infty(G)$. Hence $Z_\infty(G')\subseteq f(Z_\infty(G))$, again by Lemma \ref{hypercenterchar}.
\end{proof}
\end{corollary}


\section{Proofs}\label{proofs}

\noindent Our goal in this section is to prove the main result of this paper; namely, Theorem \ref{centerlessbynilpotent}. For the reader's convenience, we restate it here.

\vspace{2mm}\noindent \textbf{Theorem 1.1.} Let $k$ be a field and let $G$ be a connected algebraic $k$-group. There exists a nilpotent normal subgroup $Z_\infty(G)$ of $G$ such that the center of $G/Z_\infty(G)$ is trivial. The formation of $Z_\infty(G)$ commutes with base change by algebraic field extensions.

\vspace{2mm}\noindent We prove Theorem \ref{centerlessbynilpotent} via a series of intermediate results, which include Theorems \ref{largestnilpotentnormsubgp}, \ref{hypercenterthm}, and \ref{UCSterminatesthm}, along with several lemmas. Along the way, we prove Corollaries \ref{hypercentercor1} and \ref{hypercentercor2}. We conclude by giving an example which shows that Theorem \ref{centerlessbynilpotent} fails without the connectedness assumption on $G$.

\vspace{2mm}\noindent Our setup in this section is as follows.

\vspace{2mm}\noindent Let $k$ be a field, say of characteristic $p\geq 0$. Let $G$ be a connected algebraic $k$-group (not necessarily affine, unless explicitly stated). We continue to use the notation that was introduced in \S \ref{upcenser}. In particular, let \begin{equation}\label{centralquotientmap} \zeta:G\to G/Z(G)=:G^1\end{equation} denote the natural projection.

\vspace{2mm}\noindent It is known that $G^1$ is affine and its center $Z(G^1)$ is unipotent (this result is due to Rosenlicht, refer to \cite[Cor.\ 8.11]{Mi} 
and \cite[XVII, Lem.\ 7.3.2]{SGA3}). 
We will repeatedly use this result without any further reference.

\vspace{2mm}\noindent \underline{Proof of Theorem \ref{largestnilpotentnormsubgp}.}

\vspace{2mm}\noindent We aim to show that $G$ admits a largest nilpotent normal subgroup.

\begin{proof} Recall that $G^1$ is affine and $Z(G^1)$ is unipotent. According to \cite[Thm.\ 1.1]{S}, there exists a largest unipotent normal subgroup $\Rad_u(G^1)$ of $G^1$. Let $F(G)$ be the preimage of $\Rad_u(G^1)$ under the central quotient $\zeta:G\to G^1$, i.e. $$F(G):=\zeta^{-1}(\Rad_u(G^1)).$$ Certainly $F(G)$ is a nilpotent normal subgroup of $G$. We claim that it is the largest such subgroup.

\vspace{2mm}\noindent Let $H$ be a nilpotent normal subgroup of $G$. Then $\zeta(H)$ is a nilpotent normal subgroup of $G^1$. Since $G^1$ is connected and affine and $Z(G^1)$ is unipotent, applying \cite[IV, \S4, Cor.\ 1.13]{DG} tells us that $\zeta(H)$ is unipotent. So $\zeta(H)$ is contained in $\Rad_u(G^1)$. Hence $H\subseteq F(G)$. This completes the proof. 
\end{proof}

\noindent We now move on to the proof of Theorem \ref{hypercenterthm}, and its corollaries. We will need the following two lemmas. 

\begin{lemma}\label{trignilpotent} Let $G\subseteq\GL_d$ be an affine algebraic $k$-group. Let $H$ be a nilpotent subgroup of $G$ which becomes trigonalisable over an algebraic closure of $k$. Then the nilpotency class of $H$ is at most $d(d-1)/2+1$.
\begin{proof} Base changing by an algebraic closure $\overline{k}$ of $k$ leaves the nilpotency class of $H$ unchanged, so without loss of generality we can and do assume $k=\overline{k}$. Then by \cite[Thm.\ 16.26(a)]{Mi} we may write $H=U\rtimes T$, where $U$ is unipotent and $T$ is diagonalisable.

\vspace{2mm}\noindent Since $H$ is trigonalisable, applying \cite[6.49, Thm.\ 16.21 (and its proof)]{Mi} 
tells us that there exists a central series \begin{equation}\label{centunip} U=U_0 \supset U_1\supset...\supset U_r=0\end{equation} which satisfies the following conditions. The length $r$ of this central series $(\ref{centunip})$ is at most $d(d-1)/2$. For each $i\geq 0$ the subgroup $U_i$ is normal in $H$. Finally, for each $i\geq 0$ there exists an embedding of $U_i/U_{i+1}$ into $\mathbb{G}_a$ such that the action of $T$ on $U_i/U_{i+1}$ by conjugation extends to a linear action on $\mathbb{G}_a$. 

\vspace{2mm}\noindent For each $i\geq 0$ we form the semidirect product $(U_i/U_{i+1})\rtimes T$. Observe that $(U_i/U_{i+1})\rtimes T$ is a quotient of the subgroup $U_i\rtimes T$ of $H$. Hence $(U_i/U_{i+1})\rtimes T$ is nilpotent, since $H$ is nilpotent. But this forces each $U_i/U_{i+1}$ to have weight 0 as a $T$-module, as otherwise the center of $(U_i/U_{i+1})\rtimes T$ would be trivial. 
In other words, $T$ acts trivially on each $U_i/U_{i+1}$. So the central series $(\ref{centunip})$ extends to a central series \begin{equation}\label{centtrig} H\supset U\supset U_1\supset...\supset U_r=0\,. \end{equation} 
We deduce that the nilpotency class of $H$ is at most $r+1$.
\end{proof}
\end{lemma}

\begin{lemma}\label{largestmulttypenormsubgp} Let $G$ be a connected affine algebraic $k$-group. All multiplicative type normal subgroups of $G$ are contained in $Z(G)_s$. 
Moreover, $Z(G/Z(G)_s)_s$ is trivial.
\begin{proof} Let $M$ be a multiplicative type normal subgroup of $G$. Then $M$ is central in $G$, by \cite[Cor.\ 12.38]{Mi}. Hence $M\subseteq Z(G)_s$. 
Since $Z(G/Z(G)_s)_s$ is a nilpotent normal subgroup of $G/Z(G)_s$, it is unipotent (and hence trivial) by \cite[IV, \S4, Cor.\ 1.13]{DG}. 
\end{proof}
\end{lemma}

\noindent \underline{Proof of Theorem \ref{hypercenterthm}.}

\vspace{2mm}\noindent Recall that $G$ is a connected algebraic $k$-group.

\begin{proof} (a). We first consider the case where $G$ is affine. For each integer $i\geq 0$, the $i$'th center $Z_i(G)$ is a normal subgroup of $G$. Then $Z_\omega(G)$ is a normal subgroup of $G$, by \cite[Prop.\ 2.2]{S}. We know from Theorem \ref{largestnilpotentnormsubgp} that there exists a largest nilpotent normal subgroup $F(G)$ of $G$. Each $Z_i(G)$ is nilpotent (with nilpotency class at most $i$), 
hence it is contained in $F(G)$. Therefore $Z_\omega(G)\subseteq F(G)$, by definition of the schematic union. So indeed $Z_\omega(G)$ is nilpotent.

\vspace{2mm}\noindent We now relax the assumption that $G$ is affine. Recall that $\zeta:G\to G/Z(G)=:G^1$ denotes the natural projection, and that $G^1$ is affine. For every integer $i\geq 1$ observe that $$Z_i(G)=\zeta^{-1}(Z_{i-1}(G^1)),$$ by definition of the upper central series. It follows that \begin{equation}\label{pulling} Z_\omega(G)=\mathsmaller{\bigcup}_{i\geq 1}\,\zeta^{-1}(Z_{i-1}(G^1))=\zeta^{-1}(Z_{\omega}(G^1)),\end{equation} since $\zeta$ is a quotient map. We already proved the affine case, so $Z_{\omega}(G^1)$ is a nilpotent normal subgroup of $G^1$. Therefore $Z_\omega(G)$ is a nilpotent normal subgroup of $G$, since it is a central extension of $Z_{\omega}(G^1)$. This proves (a).

\vspace{2mm}\noindent (b). Let $K/k$ be an algebraic field extension. Recall that short exact sequences of algebraic $k$-groups are preserved by base change by $K/k$. 
Certainly the formation of the center $Z(G)$ of $G$ commutes with base change by $K/k$, i.e. $Z(G)_K=Z(G_K)$. Consequently, $Z_i(G)_K=Z_i(G_K)$ for every integer $i\geq 0$.

\vspace{2mm}\noindent If $G$ is affine then we may apply Proposition \ref{schunionbasechange}, giving us \begin{equation}\label{affinepresbase} Z_\omega(G)_K=\mathsmaller{\bigcup}_{i\geq 0}\,(Z_i(G)_K)=\mathsmaller{\bigcup}_{i\geq 0}\,Z_i(G_K)=Z_{\omega}(G_K).\end{equation}

\vspace{1mm}\noindent Now suppose $G$ is arbitrary. We know $G^1$ is affine, so $Z_{\omega}(G^1)_K=Z_{\omega}((G^1)_K)$ by Equation $(\ref{affinepresbase})$. Then pulling back by $\zeta$ and using Equation $(\ref{pulling})$, we deduce that $Z_\omega(G)_K=Z_{\omega}(G_K)$. This proves (b).

\vspace{2mm}\noindent (c). Suppose $G$ is affine. Recall that $Z(G)_s$ denotes the largest multiplicative type subgroup of $Z(G)$. We will show that \begin{equation}\label{redtounip} Z(G)_s=Z(Z_\omega(G))_s.\end{equation}

\vspace{1mm}\noindent By definition $Z(G)\subseteq Z_\omega(G)$, so certainly $Z(G)_s\subseteq Z(Z_\omega(G))_s$. Now for the opposite inclusion. By \cite[IV, \S 3, Thm.\ 1.1(a)]{DG}, $Z(Z_\omega(G))_s$ is a characteristic subgroup of $Z(Z_\omega(G))$. We know from part (a) that $Z_\omega(G)$ is normal in $G$, hence $Z(Z_\omega(G))_s$ is normal in $G$. Therefore $Z(Z_\omega(G))_s$ is central in $G$, by \cite[Cor.\ 12.38]{Mi}. So indeed $Z(G)_s$ contains $Z(Z_\omega(G))_s$, establishing Equation $(\ref{redtounip})$. 

\vspace{2mm}\noindent Now suppose in addition that $Z(G)$ is unipotent. Then $$Z(Z_\omega(G))_s=Z(G)_s=1,$$ by Equation $(\ref{redtounip})$. Again by part (a), we know that $Z_\omega(G)$ is a nilpotent normal subgroup of $G$. Since $G$ is affine and $Z(Z_\omega(G))$ is unipotent, by \cite[IV, \S4, Thm.\ 1.10]{DG} we deduce that $Z_\omega(G)$ is unipotent. 

\vspace{2mm}\noindent (d). By part (a), $Z_\omega(G)$ is a normal subgroup of $G$. So let \begin{equation}\label{hypercentralquotient} \pi:G\to G/Z_\omega(G)=:G^\omega \end{equation} be the natural projection. By part (b), without loss of generality we can assume that $k$ is algebraically closed. 

\vspace{2mm}\noindent We first consider the case where $G$ is affine and $Z(G)$ is unipotent. Consider the short exact sequence \begin{equation}\label{sesone} 1\to Z_\omega(G)\to\pi^{-1}(Z(G^\omega)_s)\to Z(G^\omega)_s\to 1.\end{equation}

\noindent We know that $Z_\omega(G)$ is unipotent, by part (c). Since $k$ is algebraically closed, the sequence $(\ref{sesone})$ splits by \cite[XVII, Thm.\ 5.1.1(i)(a)]{SGA3}. That is, $\pi^{-1}(Z(G^\omega)_s)=Z_\omega(G)\rtimes T$ for some multiplicative type subgroup $T$ of $G$ which satisfies $T\cong Z(G^\omega)_s$. Note that $Z_\omega(G)\rtimes T$ is a normal subgroup of $G$, since $Z(G^\omega)_s$ is a normal subgroup of $G^\omega$. 

\vspace{2mm}\noindent We claim that $T$ is trivial. To see this, consider the chain \begin{equation}\label{sestwo} Z_1(G)\rtimes T\subseteq Z_2(G)\rtimes T\subseteq ... \end{equation} of 
subgroups of $G$. The schematic union of this chain $(\ref{sestwo})$ is \begin{equation}\label{schunses} Z_\omega(G)\rtimes T=\mathsmaller{\bigcup}_{i\geq 1}(Z_i(G)\rtimes T).\end{equation}

\vspace{1mm}\noindent Let $i\geq 1$ be an integer. Observe that the $i$'th center of $Z_i(G)\rtimes T$ contains $Z_i(G)$. 
So the quotient $(Z_i(G)\rtimes T)/Z_i(Z_i(G)\rtimes T)$ is commutative. It follows that $Z_i(G)\rtimes T$ is nilpotent (with nilpotency class at most $i+1$), by definition of the upper central series. 
Note that $Z_i(G)\rtimes T$ is trigonalisable, since $Z_i(G)$ is unipotent. Then applying Lemma \ref{trignilpotent} tells us that the nilpotency class of $Z_i(G)\rtimes T$ is bounded above by an integer $c:=c(G)$ which is independent of $i$ (we may take $c=d(d-1)/2+1$, where $d$ is the minimal dimension of a faithful linear representation of $G$).

\vspace{2mm}\noindent In summary, we have thus far shown that each term of the chain $(\ref{sestwo})$ is nilpotent with nilpotency class at most $c$. Then applying Proposition \ref{nilclasspresschunion} says that its schematic union $Z_\omega(G)\rtimes T$ is also nilpotent with nilpotency class at most $c$. We already observed that $Z_\omega(G)\rtimes T$ is a normal subgroup of $G$, hence $Z_\omega(G)\rtimes T$ is unipotent by \cite[IV, \S4, Cor.\ 1.13]{DG}. Thus $T$ is trivial, proving the claim. In other words, we have shown that $Z(G^\omega)$ is unipotent. Then by part (c), we deduce that $Z_{\omega}(G^\omega)$ is unipotent.

\vspace{2mm}\noindent It remains to consider the general case; that is, $G$ is an arbitrary connected algebraic $k$-group. Recall that $G^1$ is affine and $Z(G^1)$ is unipotent. We already proved this special case, hence $Z_{\omega}(G^1/Z_{\omega}(G^1))$ is unipotent. Recall from Equation $(\ref{pulling})$ that $Z_\omega(G)=\zeta^{-1}(Z_{\omega}(G^1))$. Consequently, $$G^1/Z_{\omega}(G^1)\cong G/Z_\omega(G)=:G^\omega\,.$$ 
Hence $Z_{\omega}(G^\omega)$ is unipotent. This completes the proof of the theorem.
\end{proof}

\noindent \underline{Proof of Corollary \ref{hypercentercor1}.}

\begin{proof} Continue to let $G$ be a connected algebraic $k$-group. Fix an integer $i\geq 1$. Recall that $G^i$ is defined inductively by $G^i:=G^{i-1}/Z(G^{i-1})$. 
So $G^i$ is affine and $Z(G^i)$ is unipotent. 
The result then follows immediately from Theorem \ref{hypercenterthm}(c).
\end{proof}

\noindent \underline{Proof of Corollary \ref{hypercentercor2}.}

\begin{proof} Let $G$ be a connected affine algebraic $k$-group. We know that $Z(G/Z(G)_s)$ is unipotent, by Lemma \ref{largestmulttypenormsubgp}. The result then follows from Theorem \ref{hypercenterthm}(c).
\end{proof}

\noindent \underline{Proof of Theorem \ref{UCSterminatesthm}.}

\vspace{2mm}\noindent Let $G$ be a connected algebraic $k$-group.

\begin{proof} (a). Let $\lambda$ be the least ordinal such that the (transfinitely extended) upper central series $(\ref{transuppercentserorig})$ of $G$ terminates at $Z_\lambda(G)$ (the existence of $\lambda$ is clear, refer to \S\ref{upcenser}). That is, $Z_\lambda(G)=Z_\mu(G)$ for all ordinals $\mu >\lambda$. Our goal is to show that $\lambda < \omega^2$. 

\vspace{2mm}\noindent Fix an integer $n\geq 0$, and consider the ordinals $\omega\cdot n$ and $\omega\cdot (n+1)$. Since $\omega\cdot (n+1)$ is a limit ordinal, by definition we have \begin{equation}\label{onefunfun} Z_{\omega\cdot(n+1)}(G)=\mathsmaller{\bigcup}_{\beta < \omega\cdot(n+1)}Z_\beta(G)=\mathsmaller{\bigcup}_{i\geq 0}Z_{\omega\cdot n+i}(G),\end{equation} where $i$ runs over the non-negative integers. Denote $G^{\omega\cdot n}:=G/Z_{\omega\cdot n}(G)$. Then modding out Equation $(\ref{onefunfun})$ by $Z_{\omega\cdot n}(G)$ gives us \begin{equation}\label{twofunfun} Z_{\omega\cdot(n+1)}(G)/Z_{\omega\cdot n}(G)=\mathsmaller{\bigcup}_{i\geq 0}(Z_{\omega\cdot n+i}(G)/Z_{\omega\cdot n}(G))=\mathsmaller{\bigcup}_{i\geq 0}Z_i(G^{\omega\cdot n})=:Z_\omega(G^{\omega\cdot n}), \end{equation} using Lemma \ref{simpleobservation} and the fact that the formation of schematic unions commutes with taking quotient maps.

\vspace{2mm}\noindent Assume (for a contradiction) that, for every integer $j\geq 0$, $Z_\omega(G^{\omega\cdot j})$ is not finite. Since our group scheme $G$ (and thus each $Z_\omega(G^{\omega\cdot j})$) is of finite type over $k$, this implies that each $Z_\omega(G^{\omega\cdot j})$ is positive-dimensional. Hence by Equation $(\ref{twofunfun})$ we have $$\dim Z_{\omega\cdot j}(G)\lneq \dim Z_{\omega\cdot(j+1)}(G),$$ for each integer $j\geq 0$. But $G$ is finite-dimensional and $\dim Z_{\omega\cdot j}(G)\leq \dim G$ for each $j\geq 0$, which is a contradiction.

\vspace{2mm}\noindent So there exists some integer $m\geq 0$ such that $Z_\omega(G^{\omega\cdot m})$ is finite. This implies that the (non-extended) upper central series \begin{equation}\label{rawh} 1=Z_0(G^{\omega\cdot m})\subseteq Z_1(G^{\omega\cdot m})\subseteq ...\end{equation} of $G^{\omega\cdot m}$ terminates after finitely many terms. That is, there exists an integer $t\geq 0$ such that $Z_t(G^{\omega\cdot m})=Z_{t+1}(G^{\omega\cdot m})$. Then again using Lemma \ref{simpleobservation}, we deduce that \begin{equation}\label{finalconc} Z_{\omega\cdot m+t}(G)=Z_{\omega\cdot m+t+1}(G).\end{equation} Therefore $\lambda\leq \omega\cdot m+t<\omega^2$. In particular, $\lambda$ is countable. This proves (a).

\vspace{2mm}\noindent (b). Let $K/k$ be an algebraic field extension. Let $\alpha$ be an ordinal. We show that $Z_\alpha(G_K)=Z_\alpha(G)_K$ by transfinite induction on $\alpha$. If $\alpha=0$ then this is clear, so assume $\alpha>0$.

\vspace{2mm}\noindent Suppose $\alpha$ is a successor ordinal. Then by definition $Z_{\alpha}(G)/Z_{\alpha-1}(G)=Z(G/Z_{\alpha-1}(G))$. Recall that short exact sequences of algebraic $k$-groups are preserved by base change by $K/k$, as is the formation of the center. We know that $Z_{\alpha-1}(G_K)=Z_{\alpha-1}(G)_K$, by the inductive hypothesis. Therefore $Z_\alpha(G_K)=Z_\alpha(G)_K$.

\vspace{2mm}\noindent Now suppose $\alpha$ is a limit ordinal. By part (a) it suffices to assume $\alpha=\omega\cdot n$ for some integer $n\geq 1$. The case $n=1$ was proved in Theorem \ref{hypercenterthm}(b), so assume $n>1$. The result then follows from combining Equation $(\ref{twofunfun})$ with the inductive hypothesis.
\end{proof}

\noindent We now have enough to prove Theorem \ref{centerlessbynilpotent}. We give a proof which avoids using transfinite induction, in order to keep things as elementary as possible. 

\vspace{2mm}\noindent \underline{Proof of Theorem \ref{centerlessbynilpotent}.}

\vspace{2mm}\noindent Let $G$ be a connected algebraic $k$-group.

\begin{proof} Consider the hypercenter $Z_\infty(G)$ of $G$, as defined in \S \ref{upcenser}. By construction $Z_\infty(G)$ is a normal subgroup of $G$, and the quotient $G/Z_\infty(G)$ has trivial center. Our goal is to show that $Z_\infty(G)$ is nilpotent. Continue to let $$\pi:G\to G/Z_\omega(G)=:G^\omega$$ denote the natural projection, as in $(\ref{hypercentralquotient})$.

\vspace{2mm}\noindent We first consider the case where $G$ is affine and $Z(G)$ is unipotent. For this case we claim that $Z_\infty(G)$ is unipotent. We prove this claim by induction on the dimension $\dim G$ of $G$.

\vspace{2mm}\noindent First consider the case where the upper central series of $G$ terminates after finitely many terms. That is, there exists an integer $j\geq 0$ such that $Z_\infty(G)=Z_\omega(G)=Z_j(G)$. We know from Theorem \ref{hypercenterthm}(c) that $Z_\omega(G)$ is unipotent, and so we are done.

\vspace{2mm}\noindent Henceforth assume that the upper central series of $G$ does not terminate after finitely many terms. Then by definition $Z_\omega(G)$ is not finite. Since $Z_\omega(G)$ is a Noetherian scheme, 
this means that $\dim Z_\omega(G)>0$. 
Consequently, $\dim G^\omega\lneq \dim G$. 

\vspace{2mm}\noindent We now proceed with the induction. We already took care of the case $\dim G=0$, so assume otherwise. Observe that $Z(G^\omega)$ is unipotent, by Theorem \ref{hypercenterthm}(d). Since $\dim G^\omega\lneq \dim G$ we may apply the inductive hypothesis to $G^\omega$, which tells us that $Z_\infty(G^\omega)$ is unipotent.

\vspace{2mm}\noindent Since $\ker\pi=Z_\omega(G)$ is hypercentral in $G$, by Corollary \ref{hypercentercharcor} we have $$Z_\infty(G)=\pi^{-1}(Z_\infty(G^\omega)).$$ 
According to Theorem \ref{hypercenterthm}(c), $Z_\omega(G)$ is unipotent. Since the property of unipotency is preserved by taking group extensions, we deduce that $Z_\infty(G)$ is unipotent. This proves the claim. 

\vspace{2mm}\noindent We now move on to the general case; that is, $G$ is an arbitrary connected algebraic $k$-group (not necessarily affine). Consider the central quotient map $\zeta:G\to G/Z(G)=:G^1$. Recall that $G^1$ is affine and its center $Z(G^1)$ is unipotent (as previously discussed, this result is due to Rosenlicht). Then, by the aforementioned claim, $Z_\infty(G^1)$ is unipotent.

\vspace{2mm}\noindent Again by Corollary \ref{hypercentercharcor}, we have $$Z_\infty(G)=\zeta^{-1}(Z_\infty(G^1)).$$ Hence $Z_\infty(G)$ is nilpotent, as it is a central extension of $Z_\infty(G^1)$.

\vspace{2mm}\noindent The second assertion follows immediately from Theorem \ref{UCSterminatesthm}.
\end{proof}

\noindent We conclude by observing that Theorem \ref{centerlessbynilpotent} (along with some of our other results) fails without the connectedness assumption on $G$. We illustrate this with the following example. 

\begin{example}\label{etaleunion} Let $k$ be an algebraically closed field of characteristic $p\neq 2$. Let $$G:=\mathbb{G}_m\rtimes\mathbb{Z}/2\mathbb{Z}\, ,$$ where $\mathbb{Z}/2\mathbb{Z}$ acts on $\mathbb{G}_m$ by inversion. For each integer $i\geq 0$, the $i$'th center $Z_i(G)$ of $G$ equals $\mu_{2^i}$ (i.e. the $(2^i)$'th roots of unity inside $\mathbb{G}_m$). 
So the upper central series of $G$ does not stabilise after finitely many terms. 
Observe that $$Z_\omega(G)=\mathsmaller{\bigcup}_{i\geq 0}\,\,\mu_{2^i}=\mathbb{G}_m,$$ yet $Z_{\omega}(G/Z_\omega(G))\cong\mathbb{Z}/2\mathbb{Z}$ is not unipotent since $p\neq 2$. 

\vspace{2mm}\noindent Now let $N$ be a nilpotent normal subgroup of $G$. It is easy to see that all proper normal subgroups of $G$ are contained in $\mathbb{G}_m$. 
Certainly $G$ is not nilpotent, so $N$ is contained in $\mathbb{G}_m$. If $N=\mathbb{G}_m$ then $G/N\cong\mathbb{Z}/2\mathbb{Z}$. Otherwise, $N$ is finite and $G/N\cong G$. In either case, $Z(G/N)$ is non-trivial.
\end{example} 

\bigskip\noindent
{\textbf {Acknowledgements}}:
The author is employed/funded by the University of Freiburg Mathematical Institute.

\bibliographystyle{amsalpha}

\newcommand{\etalchar}[1]{$^{#1}$}
\providecommand{\bysame}{\leavevmode\hbox to3em{\hrulefill}\thinspace}
\providecommand{\MR}{\relax\ifhmode\unskip\space\fi MR }
\providecommand{\MRhref}[2]{%
	\href{http://www.ams.org/mathscinet-getitem?mr=#1}{#2} }
\providecommand{\href}[2]{#2}

\end{document}